\numberwithin{equation}{section}
\newtheorem{theorem}{Theorem}
\newtheorem{proposition}[theorem]{Proposition}
\newtheorem{lemma}[theorem]{Lemma}
\newtheorem{problem}{Problem}
\newtheorem{example}[theorem]{Example}
\theoremstyle{definition}
\newtheorem{definition}[theorem]{Definition}
\newcommand{\setm}{\setminus}
\newcommand{\empt}{\emptyset}
\newcommand{\subs}{\subset}
\def\<{\left\langle}
\def\>{\right\rangle}
\def\br#1;#2;{\bigl[ {#1} \bigr]^ {#2} }
\begin{document}

\author[I. Juh\'asz]{Istv\'an Juh\'asz}
\address
      { Alfréd Rényi Institute of Mathematics, Hungarian Academy of Sciences}
\email{juhasz@renyi.hu}

\author[L. Soukup]{Lajos Soukup}
\address
      { Alfréd Rényi Institute of Mathematics, Hungarian Academy of Sciences}
\email{soukup@renyi.hu}

\author[Z. Szentmikl\'ossy]{Zolt\'an Szentmikl\'ossy}
\address{E\"otv\"os University of Budapest}
\email{szentmiklossyz@gmail.com
}

\title[First countable and almost discretely Lindelöf spaces]
{First countable and almost discretely Lindelöf $T_3$ spaces
have cardinality\\ at most continuum}

\subjclass[2010]{54A25, 54D20, 54D55}
\keywords{almost discretely Lindelöf space, sequential space}
\date{\today}
\thanks{The research on and preparation of this paper was
supported by  NKFIH grant no. K 113047.}

\maketitle

\begin{abstract}
A topological space $X$ is called {\em almost discretely Lindelöf} if every discrete set $D \subs X$
is included in a Lindelöf subspace of $X$. We say that the space $X$ is {\em $\mu$-sequential}
if for every non-closed set $A \subs X$ there is a sequence 
of length $\le \mu$ in $A$ that converges to a point which is not in $A$. 
With the help of a technical theorem that involves elementary submodels, we establish the following
two results concerning such spaces.

\begin{enumerate}
\item For every almost discretely Lindelöf $T_3$ space $X$ we have $|X| \le 2^{\chi(X)}$.

\item If $X$ is a $\mu$-sequential $T_2$ space of pseudocharacter
$\psi(X) \le 2^\mu$ and for every free set $D \subs X$ we have
$L(\overline{D}) \le \mu$, then $|X| \le 2^\mu$.
\end{enumerate}

The case $\chi(X) = \omega$ of (1) provides a solution to Problem 4.5 of \cite{JTW},
while the case  $\mu = \omega$ of (2) is a partial improvement on the main result of
\cite{AB}.
\end{abstract}

\bigskip

Our main aim in this note is to prove what is stated in the title and thus
give a solution to problem 4.5 of \cite{JTW}.

All spaces in here are assumed to be $T_1$. Consequently, if $X$ is any space
and $A$ is any subset of $X$ then the pseudocharacter $\psi(A,X)$ of
$A$ in $X$, i.e. the smallest size of a family of open sets whose intersection
is $A$, is well-defined.

We recall that a transfinite sequence $\{x_\alpha : \alpha < \eta\} \subs X$
is called a free sequence in $X$ if for every $ \beta < \eta$ we have

$$\overline{\{x_\alpha : \alpha < \beta\}} \cap \overline{\{x_\alpha : \beta \le \alpha < \eta\}} = \emptyset .$$

We say that a subset $D \subs X$ is free if it has a well-ordering that turns it into
a free sequence. Clearly, every free set is discrete and every countable discrete set
is free in $X$. We shall use $\mathcal{F}(X)$ to denote the family of all free subsets of $X$.
The freeness number $F(X)$ of $X$ is  defined by $F(X) = \sup\{|D| : D \in  \mathcal{F}(X)\}$, while
its hat version $\widehat{F}(X)$ is the smallest cardinal $\kappa$ such that $X$ has no free
subset of size $\kappa$.

Given a space $X$ and an infinite cardinal $\kappa$, we are going to consider elementary submodels
$M$ of $H(\lambda)$ for a large enough regular cardinal $\lambda$ such that $X \in M$ and $M$ is
$<\kappa$-closed, i.e. $M^\varrho \subs M$ for all $\varrho < \kappa$. (Note that by $X \in M$
we really mean $\<X,\,\tau\> \in M$ where $\tau$ is the topology on $X$.) We may also assume,
without any loss of generality, that $\mu + 1 \subs M$ where $\mu = |M|$. The following proposition
is an easy consequence of standard cardinal arithmetic.

\begin{proposition}\label{pr:k}
The minimum cardinality of a $\,<\kappa$-closed elementary submodel $M$ (of some $H(\lambda)$) is
$2^{<\kappa}$ if $\kappa$ is regular and $2^\kappa$ if $\kappa$ is singular.
\end{proposition}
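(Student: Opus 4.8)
The plan is to establish matching lower and upper bounds for the minimum size of a $<\kappa$-closed elementary submodel, treating the regular and singular cases separately on the lower-bound side.

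For the upper bound, I would build the model $M$ as the union of a continuous increasing chain $\langle M_\xi : \xi < \kappa \rangle$ of elementary submodels of $H(\lambda)$, where $M_0$ contains $X$ and $\kappa$, each $M_\xi$ has cardinality at most $2^{<\kappa}$ (in the regular case) respectively $2^\kappa$ (in the singular case), and, crucially, $M_{\xi+1} \supseteq {}^{<\kappa}M_\xi$, i.e. $M_{\xi+1}$ contains every sequence of length $<\kappa$ with entries in $M_\xi$. This last requirement can be met because the number of such sequences is at most $\big(|M_\xi|\big)^{<\kappa}$, which equals $2^{<\kappa}$ when $\kappa$ is regular (here one uses the standard fact $(2^{<\kappa})^{<\kappa} = 2^{<\kappa}$ for regular $\kappa$) and $2^\kappa$ when $\kappa$ is singular; in either case one can enumerate these sequences and throw them, together with witnesses for the elementarity requirements, into $M_{\xi+1}$ while keeping its size within the target bound. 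Taking $M = \bigcup_{\xi<\kappa} M_\xi$, any sequence $s \in {}^{<\kappa}M$ has, by the regularity of $\kappa$ and the continuity of the chain, all its entries already in some $M_\xi$, hence $s \in M_{\xi+1} \subseteq M$; so $M$ is $<\kappa$-closed, and $|M| \le \kappa \cdot \sup_\xi |M_\xi|$ has the desired value.

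For the lower bound, suppose $M$ is $<\kappa$-closed. First note $\kappa \subseteq M$: since $\kappa \in M$ (as $\kappa$ is definable or, more simply, we require it) and $M$ is closed under $<\kappa$-sequences, each ordinal $\alpha<\kappa$, being (coded by) a sequence of length $<\kappa$ of ordinals already seen, lies in $M$; then $M$ contains every subset of every $\delta<\kappa$ (such a subset is the range of a $<\kappa$-sequence of its elements, all in $M$), so $\mathcal P(\delta) \subseteq M$ for each $\delta < \kappa$, whence $|M| \ge 2^{<\kappa}$. When $\kappa$ is singular, this already gives $|M| \ge 2^{<\kappa}$; to push it to $2^\kappa$, pick a cofinal sequence $\langle \delta_i : i < \mathrm{cf}(\kappa)\rangle$ in $\kappa$ — this sequence has length $\mathrm{cf}(\kappa) < \kappa$ and, once its terms are in $M$, the sequence itself is in $M$ — and observe that any subset $A \subseteq \kappa$ is determined by the $\mathrm{cf}(\kappa)$-sequence $\langle A \cap \delta_i : i < \mathrm{cf}(\kappa)\rangle$ of elements of $M$, hence $A \in M$; thus $\mathcal P(\kappa) \subseteq M$ and $|M| \ge 2^\kappa$.

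The main obstacle is the singular case of the lower bound: one must leverage $<\kappa$-closure to capture \emph{all} subsets of $\kappa$, not merely subsets of ordinals below $\kappa$, and the key realization is that a cofinal sequence of length $\mathrm{cf}(\kappa)<\kappa$ is itself short enough to belong to $M$, which then lets every subset of $\kappa$ be reconstructed inside $M$ from its short trace sequence. On the upper-bound side the delicate point is the cardinal-arithmetic bookkeeping, namely checking that $(2^{<\kappa})^{<\kappa}=2^{<\kappa}$ for regular $\kappa$ and that the singular bound $2^\kappa$ is stable under the chain construction; but these are routine once the right chain is set up.
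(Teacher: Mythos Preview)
The paper does not actually prove this proposition; it simply remarks that it ``is an easy consequence of standard cardinal arithmetic'' and moves on. Your plan supplies precisely the kind of argument the authors are gesturing at, and the lower-bound half is correct in both cases. In particular, the inductive argument that $\kappa\subseteq M$ (which, incidentally, does not need the assumption $\kappa\in M$ that you parenthetically invoke: once all $\beta<\alpha$ are in $M$, the $\alpha$-sequence enumerating them lies in $M$ by closure, and $\alpha$ is its domain) and the singular-case trick of recovering an arbitrary $A\subseteq\kappa$ from the short sequence $\langle A\cap\delta_i : i<\operatorname{cf}(\kappa)\rangle$ are exactly right.

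There is, however, a real slip on the upper-bound side. You build a single chain of length $\kappa$ and then argue that every $s\in{}^{<\kappa}M$ has all of its entries in some $M_\xi$ ``by the regularity of $\kappa$.'' That justification is only available when $\kappa$ is regular; in the singular case a sequence of length $\mu$ with $\operatorname{cf}(\kappa)\le\mu<\kappa$ can have its entries spread cofinally through the chain, so it need not land in any $M_\xi$, and hence $M=\bigcup_{\xi<\kappa}M_\xi$ need not be $<\kappa$-closed. The fix is easy: for singular $\kappa$ run the chain to length $\kappa^{+}$ instead, using $(2^{\kappa})^{\kappa}=2^{\kappa}$ to keep each $M_\xi$ of size at most $2^{\kappa}$; since $\kappa^{+}$ is regular, any $<\kappa$-sequence from $M$ is then trapped in some $M_\xi$ and hence lies in $M_{\xi+1}\subseteq M$. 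Your closing remark that things are routine ``once the right chain is set up'' suggests you sensed this, but the chain you actually specified is not the right one for the singular case.
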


We now present a (somewhat technical) result that, in addition to a space $X$ and a cardinal $\kappa$,
involves such an elementary submodel. This result plays a crucial role in the proof of our main result
and we suspect that it will have numerous other interesting consequences as well.

\begin{theorem}\label{tm:model}
Fix a space $X$ and a cardinal $\kappa$, moreover let $M$ be a $<\kappa$-closed elementary submodel
(of some $H(\lambda)$) such that $X \in M$ and $\mu + 1 \subs M$ where $\mu = |M|$.
If for every $D \in \mathcal{F}(X) \cap [X \cap M]^{<\kappa}$ we have $\psi(\overline{D},X) \le \mu$,
then either $$X = \bigcup\{\overline{D} : D \in \mathcal{F}(X) \cap [X \cap M]^{<\kappa}\}$$
or $\,\widehat{F}(X) > \kappa$, i.e. there is a free set of size $\kappa$ in $X$.
\end{theorem}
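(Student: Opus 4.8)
The plan is to assume that the first alternative fails, i.e.\ that
$X \neq \bigcup\{\overline{D} : D \in \mathcal{D}\}$ where $\mathcal{D} := \mathcal{F}(X) \cap [X\cap M]^{<\kappa}$, and from this produce a free subset of $X$ of size $\kappa$, which is exactly what $\widehat{F}(X) > \kappa$ asserts. So fix once and for all a point $p \in X$ lying outside $\bigcup\{\overline{D} : D \in \mathcal{D}\}$. I would then build, by recursion on $\gamma < \kappa$, points $x_\gamma \in X \cap M$ and closed sets $C_\gamma \subseteq X$ with $C_\gamma \in M$, keeping the following invariants (writing $S_\gamma := \{x_\beta : \beta < \gamma\}$): $p \in C_\gamma$; $C_\gamma \subseteq C_{\gamma'}$ for every $\gamma' < \gamma$; $x_\gamma \in C_\gamma$; and $C_\gamma \cap \overline{S_\gamma} = \emptyset$. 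Granting such a construction, $D := \{x_\gamma : \gamma < \kappa\}$ is the desired set: the $x_\gamma$ are pairwise distinct, since $x_\gamma \notin \overline{S_\gamma}$ while $x_{\gamma'} \in \overline{S_\gamma}$ for $\gamma' < \gamma$, so $|D| = \kappa$; and for each $\gamma$ the tail $\{x_\beta : \gamma \le \beta < \kappa\}$ lies inside the closed set $C_\gamma$ (because $x_\beta \in C_\beta \subseteq C_\gamma$ whenever $\beta \ge \gamma$), whence $\overline{\{x_\beta : \beta \ge \gamma\}} \subseteq C_\gamma$, which is disjoint from $\overline{S_\gamma} = \overline{\{x_\beta : \beta < \gamma\}}$ --- precisely the freeness condition for the enumeration $\langle x_\gamma : \gamma < \kappa\rangle$.

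For the recursion step at stage $\gamma$, set $C_{<\gamma} := \bigcap_{\gamma' < \gamma} C_{\gamma'}$, read as $X$ when $\gamma = 0$. Two preliminary facts are needed, both routine from the closure properties of $M$ together with the inductive invariants. First, $C_{<\gamma} \in M$: the sequence $\langle C_{\gamma'} : \gamma' < \gamma\rangle$ is a function from $\gamma < \kappa$ into $M$, hence lies in $M$ by $<\kappa$-closedness, and $C_{<\gamma}$ is definable from it; also $C_{<\gamma}$ is closed and contains $p$. Second, $S_\gamma$ belongs to $\mathcal{D}$: it is a free sequence, since for $\gamma' < \gamma$ the tail $\{x_\beta : \gamma' \le \beta < \gamma\}$ is contained in the closed set $C_{\gamma'}$, which is disjoint from $\overline{S_{\gamma'}}$, so $\overline{\{x_\beta : \beta < \gamma'\}} \cap \overline{\{x_\beta : \gamma' \le \beta < \gamma\}} = \emptyset$; moreover $|S_\gamma| < \kappa$ and $S_\gamma \subseteq X \cap M$, so $S_\gamma \in \mathcal{F}(X) \cap [X\cap M]^{<\kappa}$. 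In particular $\overline{S_\gamma}$ is included in $\bigcup\{\overline{D} : D \in \mathcal{D}\}$, so $p \notin \overline{S_\gamma}$, and the hypothesis of the theorem applies to $S_\gamma$, giving $\psi(\overline{S_\gamma},X) \le \mu$.

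Now comes the one place where the assumption on $\psi$ really does the work. Since $\psi(\overline{S_\gamma},X) \le \mu$ and $\overline{S_\gamma}, X, \mu$ all lie in $M$, elementarity yields a family $\mathcal{U}_\gamma \in M$ of open subsets of $X$ with $|\mathcal{U}_\gamma| \le \mu$ and $\bigcap \mathcal{U}_\gamma = \overline{S_\gamma}$; since $\mu \subseteq M$ and $\mathcal{U}_\gamma \in M$ has size at most $\mu$, in fact $\mathcal{U}_\gamma \subseteq M$. As $p \notin \overline{S_\gamma} = \bigcap \mathcal{U}_\gamma$, I would fix $U \in \mathcal{U}_\gamma$ with $p \notin U$ and put $C_\gamma := C_{<\gamma} \cap (X \setminus U)$. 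Then $C_\gamma \in M$ (it is built from $C_{<\gamma}, X, U \in M$), $C_\gamma$ is closed, $p \in C_\gamma$, and $C_\gamma \subseteq C_{\gamma'}$ for every $\gamma' < \gamma$; also $C_\gamma \cap \overline{S_\gamma} \subseteq (X \setminus U) \cap \bigcap \mathcal{U}_\gamma = \emptyset$. Finally $C_\gamma$ is a nonempty element of $M$, so $C_\gamma \cap M \neq \emptyset$ by elementarity, and I would take any $x_\gamma \in C_\gamma \cap M \subseteq X \cap M$. All four invariants persist, so the recursion runs through every $\gamma < \kappa$, and the first paragraph finishes the argument.

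The part I expect to be the main obstacle --- and the reason the statement invokes $\psi$ rather than, say, regularity of $X$ --- is handling the limit stages: to keep the sequence free one must trap every tail inside a single \emph{closed} set that misses the corresponding initial closure, and that closed set is forced to live in $M$ while nevertheless containing the target point $p$, which need not belong to $M$. The $\psi$-hypothesis is exactly what makes this possible, since it lets one express the open set $X \setminus \overline{S_\gamma}$ as a union of at most $\mu$ closed sets $X \setminus U$ with $U \in \mathcal{U}_\gamma \subseteq M$; one of these catches $p$, and it can then be cut down by the previously constructed closed sets without leaving $M$.
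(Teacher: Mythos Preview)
Your argument is correct and follows essentially the same strategy as the paper: fix a point $p$ outside the union, then recursively build a free sequence of length $\kappa$ inside $X\cap M$ using the $\psi$-hypothesis together with $<\kappa$-closedness and elementarity to separate initial segments from tails. The only difference is cosmetic: the paper records the open sets $V_\alpha\in M$ containing $\overline{D_\alpha}$ and missing $p$, then picks $x_\alpha\in X\cap M\setminus\bigcup_{\beta\le\alpha}V_\beta$, whereas you work with the complementary closed sets $C_\gamma$ (so your $C_\gamma$ is exactly $X\setminus\bigcup_{\beta\le\gamma}V_\beta$ in the paper's notation); your explicit verification that $S_\gamma$ is free at each stage is a detail the paper leaves implicit.
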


\begin{proof}
Let us put $Y = \bigcup\{\overline{D} : D \in \mathcal{F}(X) \cap [X \cap M]^{<\kappa}\}$ and assume that
$X \ne Y$. We may then fix a point $p \in X \setm Y$. Note that, as $M$ is $<\kappa$-closed, we have
$\mathcal{F}(X) \cap [X \cap M]^{<\kappa} \subs M$, hence for every $D \in \mathcal{F}(X) \cap [X \cap M]^{<\kappa}$
we have $\overline{D} \in M$ as well. Consequently, by elementarity, for every such $D$ there is a family
of open sets $\mathcal{V}_D \in M$ such that $\bigcap \mathcal{V}_D = \overline{D}$ and $|\mathcal{V}_D| \le \mu$.
Note that then we have  $\mathcal{V}_D \subs M$ as well.

We are now going to define points $x_\alpha \in X \cap M$ and open sets $V_\alpha \in M$ by transfinite
recursion on $\alpha < \kappa$  so that the following three conditions hold true for every $\alpha < \kappa$:

\begin{enumerate}[(i)]
\item $p \notin V_\alpha$;

\item $\overline{\{x_\beta : \beta < \alpha\}} \subs V_\alpha$;

\smallskip

\item $\{x_\beta : \alpha \le \beta < \kappa\} \cap V_\alpha = \empt$.
\end{enumerate}
Clearly then $\{x_\alpha : \alpha < \kappa\}$ will be a free sequence of length $\kappa$.

So, assume that $\alpha < \kappa$ and we have defined $D_\alpha = \{x_\beta : \beta < \alpha\} \subs X \cap M$
and $\{V_\beta : \beta < \alpha\} \subs \tau \cap M$ such that for every $\gamma < \alpha$ we have $p \notin V_\gamma$,
$\,\overline{D_\gamma} \subs V_\gamma$, and $\{x_\beta : \gamma \le \beta < \alpha\} \cap V_\gamma = \empt$.

Then we have $D_\alpha \in \mathcal{F}(X) \cap [X \cap M]^{<\kappa}$, and so there is $V_\alpha \in \mathcal{V}_{D_\alpha}$
such that $p \notin V_\alpha$. As $M$ is $<\kappa$-closed, we then have $\{V_\beta : \beta \le \alpha\} \in M$ as well and
so $p \notin \bigcup\{V_\beta : \beta \le \alpha\}$ implies $X \cap M \setm \bigcup\{V_\beta : \beta \le \alpha\} \ne \emptyset$
by elementarity. We then pick $x_\alpha$ as any element of $X \cap M \setm \bigcup\{V_\beta : \beta \le \alpha\}$.
It is clear that this recursive procedure carries though for all $\alpha < \kappa$, moreover
$\{x_\alpha : \alpha < \kappa\}$ and $\{V_\alpha : \alpha < \kappa\}$ satisfy the three conditions (i) -- (iii).
\end{proof}

We now turn to applying Theorem \ref{tm:model} to produce some new cardinal function inequalities,
in particular the statement formulated in the title. Our notation and terminology of cardinal functions
follow those in \cite{J}.

We recall, see \cite{JTW}, that a space $X$ is called (almost) discretely Lindelöf if for every discrete set $D \subs X$
we have that $\overline{D}$ is Lindelöf (resp. $D$ is included in a Lindelöf subspace of $X$).
We now present several lemmas concerning such spaces. We shall use $\mathcal{D}(X)$ to denote the family of all discrete
subspaces of $X$. The spread $s(X)$ of $X$ is then defined by $s(X) = \sup\{|D| : D \in \mathcal{D}(X)\}$.

\begin{lemma}\label{lm:s}
For every almost discretely Lindelöf $T_2$ space $X$ we have (i) $s(X) \le 2^{t(X)\cdot\psi(X)}$
and (ii) $F(X) \le t(X)$.
\end{lemma}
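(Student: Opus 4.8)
The plan is to prove the two inequalities of Lemma~\ref{lm:s} separately, with part (ii) serving as a tool for part (i).

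\textbf{Part (ii): $F(X) \le t(X)$.} Suppose toward a contradiction that there is a free sequence $\{x_\alpha : \alpha < \kappa\}$ in $X$ with $\kappa = t(X)^+$. Since $\kappa$ is a successor cardinal, it is regular, and by the usual characterization of tightness via free sequences (or directly from the definition of a free sequence together with $t(X) < \kappa$) one argues that the closures of the initial segments already ``capture'' the tail: there should be a point $p$ in the closure of $\{x_\alpha : \alpha < \kappa\}$ whose membership is witnessed, by $t(X) \le \mu$ say, within some initial segment $\{x_\alpha : \alpha < \beta\}$ for $\beta < \kappa$, and the freeness of the sequence then forces a contradiction because $p$ would have to lie in $\overline{\{x_\alpha : \alpha < \beta\}} \cap \overline{\{x_\alpha : \beta \le \alpha < \kappa\}}$, which is empty. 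Here I would use almost discrete Lindelöfness: the free set $D = \{x_\alpha : \alpha < \kappa\}$ is discrete, hence contained in a Lindelöf subspace $L$, and a free sequence of regular uncountable length in a space of tightness $< \kappa$ cannot have its closure Lindelöf — a standard fact, since one can find an open cover of $\overline D$ with no small subcover by peeling off initial segments. So $F(X) \le t(X)$.

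\textbf{Part (i): $s(X) \le 2^{t(X)\cdot\psi(X)}$.} Set $\kappa = \left(2^{t(X)\cdot\psi(X)}\right)^+$ and suppose $D$ is a discrete subspace of $X$ with $|D| = \kappa$. The strategy is to apply Theorem~\ref{tm:model} with this $\kappa$, using an elementary submodel $M$ of size $\mu = 2^{<\kappa} = 2^{t(X)\cdot\psi(X)}$ (note $\kappa$ is regular as a successor, so Proposition~\ref{pr:k} gives such an $M$, and $\mu < \kappa$). I must first check the hypothesis of Theorem~\ref{tm:model}: for every $D' \in \mathcal F(X) \cap [X\cap M]^{<\kappa}$ we need $\psi(\overline{D'},X) \le \mu$. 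Since $|D'| < \kappa$ and, by part (ii), $F(X) \le t(X)$, actually $|D'| \le t(X)$; then $\overline{D'}$ has pseudocharacter at most $|D'| \cdot \psi(X) \le t(X) \cdot \psi(X) \le \mu$ (using that in a $T_2$, indeed any $T_1$, space the closure of a set of size $\lambda$ has pseudocharacter bounded by $\lambda \cdot \psi(X)$ via taking, for each point, the intersection of a pseudobase and closures of the $\le\lambda$ points). Theorem~\ref{tm:model} then yields either a free set of size $\kappa$ — impossible by part (ii) since $\kappa > t(X) \ge F(X)$ — or $X = \bigcup\{\overline{D'} : D' \in \mathcal F(X)\cap[X\cap M]^{<\kappa}\}$. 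In the latter case each such $\overline{D'}$ is the closure of a set of size $\le t(X)$, hence (using almost discrete Lindelöfness applied to the free, thus discrete, set $D'$, or a direct counting argument) has size at most $2^{t(X)\cdot\psi(X)} = \mu$; and there are at most $|[X\cap M]^{<\kappa}| \le \mu^{<\kappa} = \mu$ many such sets $D'$ (here one checks $\mu^{<\kappa} = \mu$ from $\mu = 2^{<\kappa}$ and $\kappa$ regular). Therefore $|X| \le \mu \cdot \mu = \mu$, so $s(X) \le \mu = 2^{t(X)\cdot\psi(X)}$, contradicting $|D| = \kappa > \mu$.

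\textbf{Main obstacle.} The delicate point is verifying the pseudocharacter hypothesis of Theorem~\ref{tm:model} cleanly and bounding $|\overline{D'}|$ for the free sets $D'$: one wants $|\overline{D'}| \le 2^{t(X)\cdot\psi(X)}$, which should follow from the Čech–Pospíšil / Arhangelskii-type bound $|\overline A| \le |A|^{t(X)\cdot\psi(X)}$ or, more in the spirit of this paper, from applying almost discrete Lindelöfness to $D'$ to get a Lindelöf $L \supseteq D'$ and then the inequality $|L| \le 2^{t(L)\cdot\psi(L)}$ for Lindelöf spaces. Some care is needed about whether $\psi$ and $t$ of $L$ are controlled by those of $X$, but since $t$ is monotone on subspaces and $\psi(L) \le \psi(X)\cdot L(L) \le \psi(X) \cdot \omega$ won't quite be what's wanted, I would instead rely on the direct closure-pseudocharacter estimate for the hypothesis check and on a separate counting argument (each point of $\overline{D'}$ is determined by its $\le\mu$-sized pseudobase intersected with $M$-definable data) to bound $|\overline{D'}|$; getting these two estimates to line up at exactly $\mu = 2^{t(X)\cdot\psi(X)}$ is the crux.
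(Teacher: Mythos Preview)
Your part (ii) is essentially the paper's argument: embed the free sequence in a Lindel\"of subspace, take a complete accumulation point there, and use $t(X) < t(X)^+$ to land in both an initial-segment closure and a tail closure, contradicting freeness.

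Your part (i), however, is both far more elaborate than the paper's proof and contains genuine errors. The paper's argument is a single line: if $D$ is discrete, almost discrete Lindel\"ofness gives a Lindel\"of $Y \supseteq D$, and Shapirovskii's inequality $|Y| \le 2^{t(Y)\cdot\psi(Y)} \le 2^{t(X)\cdot\psi(X)}$ (both $t$ and $\psi$ being monotone for subspaces) bounds $|D|$. There is no need for Theorem~\ref{tm:model} or elementary submodels at all.

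Your proposed route through Theorem~\ref{tm:model} has at least two concrete gaps. First, the cardinal arithmetic is wrong: with $\kappa = \big(2^{t(X)\cdot\psi(X)}\big)^+$ one has $2^{<\kappa} = 2^{2^{t(X)\cdot\psi(X)}}$, not $2^{t(X)\cdot\psi(X)}$, so you cannot take $M$ of size $\mu = 2^{t(X)\cdot\psi(X)}$ while keeping it ${<}\kappa$-closed. Second, your verification of the hypothesis $\psi(\overline{D'},X) \le \mu$ is unsupported: the claim that in a $T_2$ (or $T_1$) space the closure of a set of size $\lambda$ has pseudocharacter $\le \lambda\cdot\psi(X)$ is false in general, and your sketch ``intersection of a pseudobase and closures of the $\le\lambda$ points'' does not produce a family of open sets with intersection $\overline{D'}$. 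The paper itself needs $T_3$ (in Theorem~\ref{tm:chi}) to bound $\psi(\overline{D'},X)$ via $hL(X)$, and explicitly flags this as the place where $T_3$ is essential. Finally, note that your argument, if it worked, would yield $|X| \le 2^{t(X)\cdot\psi(X)}$ outright for almost discretely Lindel\"of $T_2$ spaces; this is strictly stronger than the lemma and is not claimed anywhere in the paper even for $T_3$ spaces.
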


\begin{proof}
For every $D \in \mathcal{D}(X)$ there is a Lindelöf $Y \subs X$ with $D \subs Y$. Consequently, we have
$|D| \le |Y| \le 2^{t(X)\cdot\psi(X)}$ by Shapirovskii's theorem from \cite{Sh}, see also 2.27 of \cite{J}.
This proves (i).

To see (ii), assume, arguing indirectly, that $D = \{x_\alpha : \alpha < t(X)^+\}$ is a free sequence in $X$.
Then again there is a Lindelöf $Y \subs X$ with $D \subs Y$ and so $D$ has a complete accumulation
point $y$ in $Y$. But then, by the definition of $t(X)$, there is an $\alpha < t(X)^+$ such that
$y \in \overline{\{x_\beta : \beta < \alpha\}}$ and at the same time
$y \in \overline{\{x_\beta : \alpha \le \beta < t(X)^+\}}$, contradicting that $\{x_\alpha : \alpha < t(X)^+\}$
is a free sequence. This proves (ii).
\end{proof}

The cardinal function $g(X) = \sup\{|\overline{D}| : D \in \mathcal{D}(X)\}$ was introduced in \cite{A}
(and is not mentioned in \cite{J}). Since every right separated (or equivalently: scattered) space
has a dense discrete subspace, for every space $X$ we have $h(X) \le g(X)$ because $h(X)$ is just the
supremum of the sizes of all right separated subspaces of $X$.

We are now ready to formulate and prove our main result.

\begin{theorem}\label{tm:chi}
For every almost discretely Lindelöf $T_3$ space $X$ we have $|X| \le 2^{\chi(X)}$.
\end{theorem}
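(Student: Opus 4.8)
My plan is to run the submodel dichotomy of Theorem~\ref{tm:model} with $\kappa = \chi^+$, where $\chi = \chi(X)$; since $X$ is $T_3$ we have $t(X)\le\chi$ and $\psi(X)\le\chi$. By Proposition~\ref{pr:k} I can fix a $<\chi^+$-closed elementary submodel $M$ of a suitable $H(\lambda)$ with $X\in M$, $|M| = 2^{<\chi^+} = 2^\chi =: \mu$, and (without loss of generality) $\mu+1\subs M$. Two facts will make the dichotomy do all the work: \textbf{(a)} $X$ has no free set of size $\chi^+$, which rules out the second alternative of Theorem~\ref{tm:model}; and \textbf{(b)} $\psi(\overline{D},X)\le 2^\chi$ for every free set $D\subs X$, which verifies the hypothesis of Theorem~\ref{tm:model} for all $D\in\mathcal{F}(X)\cap[X\cap M]^{<\chi^+}$, as these all have size $\le\chi$. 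Granting (a) and (b), Theorem~\ref{tm:model} gives $X=\bigcup\{\overline{D}:D\in\mathcal{F}(X)\cap[X\cap M]^{<\chi^+}\}$; since this index set has at most $\big|[X\cap M]^{\le\chi}\big|\le|M|^{\chi}=(2^\chi)^\chi=2^\chi$ members and, by the estimate in (b) below, each such $\overline{D}$ satisfies $|\overline{D}|\le 2^\chi$, I conclude $|X|\le 2^\chi$.

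Fact (a) is immediate: Lemma~\ref{lm:s}(ii) gives $F(X)\le t(X)\le\chi$, so every free set has size $\le\chi$ and in particular $\widehat{F}(X)\le\chi^+$. Fact (b) is the substantive point, and I expect its proof --- controlling $\psi(\overline{D},X)$ with no a priori bound on $|X|$ --- to be the main obstacle. I would obtain it from the chain
$$\psi(\overline{D},X)\ \le\ L\big(X\setm\overline{D}\big)\ \le\ h(X)\ \le\ g(X)\ \le\ 2^\chi .$$
The first inequality uses only that $X$ is regular and $\overline{D}$ closed: for each $x\in X\setm\overline{D}$ pick an open $U_x\ni x$ with $\overline{U_x}\cap\overline{D}=\emptyset$, take a subcover of $\{U_x:x\in X\setm\overline{D}\}$ of size $L(X\setm\overline{D})$, and pass to closures, exhibiting $X\setm\overline{D}$ as a union of that many closed sets disjoint from $\overline{D}$, i.e. $\overline{D}$ as an intersection of that many open sets. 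The second inequality is the usual greedy recursion: given an open cover $\mathcal{G}$ of $X\setm\overline{D}$, choose $x_\xi\in(X\setm\overline{D})\setm\bigcup_{\eta<\xi}G_\eta$ together with $G_\xi\in\mathcal{G}$ containing $x_\xi$ for as long as possible; the chosen points form a right separated subspace --- the open sets $\bigcup_{\eta\le\xi}G_\eta$ cut out its initial segments --- so the recursion, hence the extracted subcover, has length $\le h(X)$. The third inequality is the remark just preceding this theorem.

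For the last link, $g(X)\le 2^\chi$, I would argue as follows. By Lemma~\ref{lm:s}(i), $s(X)\le 2^{t(X)\cdot\psi(X)}\le 2^\chi$, so every discrete $D'\subs X$ has $|D'|\le 2^\chi$. Since $t(X)\le\chi$ we may write $\overline{D'}=\bigcup\{\overline{E}:E\in[D']^{\le\chi}\}$, a union of at most $(2^\chi)^\chi=2^\chi$ sets; each $E$ here is discrete of size $\le\chi$, so $c(\overline{E})\le d(\overline{E})\le|E|\le\chi$ and $\chi(\overline{E})\le\chi$, whence $|\overline{E}|\le 2^{c(\overline{E})\cdot\chi(\overline{E})}\le 2^\chi$ by the Hajnal--Juh\'asz inequality. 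Hence $|\overline{D'}|\le 2^\chi$, giving $g(X)\le 2^\chi$; applied to a free $D$ (of size $\le\chi$), the same estimate yields the bound $|\overline{D}|\le 2^\chi$ used in the counting above. This closes the chain, establishes (b), and completes the proof.
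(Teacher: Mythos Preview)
Your proof is correct and follows essentially the same route as the paper: bound $g(X)\le 2^{\chi}$, deduce $\Psi(X)\le h(X)\le g(X)\le 2^{\chi}$ via regularity, and then apply Theorem~\ref{tm:model} with $\kappa=\chi^+$ together with Lemma~\ref{lm:s}(ii). The only cosmetic difference is in the proof of $g(X)\le 2^{\chi}$: the paper uses the one-line estimate $|\overline{D}|\le |D|^{\chi(X)}$ (2.5 of \cite{J}) directly, whereas you take a slightly longer detour through the tightness decomposition and the Hajnal--Juh\'asz inequality.
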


\begin{proof}
We first show that $g(X) \le 2^{\chi(X)}$. Indeed, if $D \in \mathcal{D}(X)$ then
we have $|D| \le 2^{t(X)\cdot\psi(X)} \le 2^{\chi(X)}$ by part (i) of Lemma \ref{lm:s}.
But we also have $|\overline{D}| \le |D|^{\chi(X)}$, see e.g. 2.5 of \cite{J},
hence putting these together we get $|\overline{D}| \le 2^{\chi(X)}$ as well.
This, in turn, yields us $h(X) \le g(X) \le 2^{\chi(X)}$.

But as $X$ is $T_3$, we have $\Psi(X) \le h(X) = hL(X)$, i.e. for every closed
set $H \subs X$ we have $\psi(H,X) \le h(X)$. Indeed, every point $x \in X \setm H$
admits a neighborhood $U_x$ such that $\overline{U_x} \cap H = \empt$ and then there 
is a set $A \subs X \setm H$ with $|A| \le h(X)$ such that 
$X \setm H =  \bigcup\{U_x : x \in A\} = \bigcup\{\overline{U_x} : x \in A\}$.
This is the point where the $T_3$
property, and not just $T_2$, is essentially used.

And now we apply Theorem \ref{tm:model} to $X$ with the choice $\kappa = \chi(X)^+$,
by choosing an appropriate elementary submodel $M$ of cardinality $2^{\chi(X)}$
that is $\chi(X)$-closed (i.e. ${<\chi(X)^+}$-closed), moreover $X \in M$ and
$2^{\chi(X)}+1 \subs M$. Note that we have established above the inequality
$\Psi(X) \le 2^{\chi(X)}$ that is much more than what is needed to
ensure the applicability of Theorem \ref{tm:model}.

But by part (ii) of Lemma \ref{lm:s} we have $F(X) \le t(X) \le \chi(X)$, i.e.
there is no free set in $X$ of size $\kappa = \chi(X)^+$, consequently
$$X = \bigcup\{\overline{D} : D \in \mathcal{F}(X) \cap M\}$$
must be satisfied.

Finally, using again $g(X) \le 2^{\chi(X)}$ we can conclude from the above equality
and from $|M| = 2^{\chi(X)}$ that $|X| \le 2^{\chi(X)}$.
\end{proof}

We do not know if the $T_3$ property can be relaxed to the $T_2$ property
in Theorem \ref{tm:chi}, even in the countable case $\chi(X) = \omega$.
However, it should be mentioned concerning this that Spadaro has given
a consistent affirmative answer to this question in \cite{S}. He proved
in fact that if $\mathfrak{c} = 2^{<\mathfrak{c}}$ then every sequential $T_2$
space of pseudocharacter $\le \mathfrak{c}$ which is almost discretely Lindelöf
has cardinality $\le \mathfrak{c}$.

Our next application of Theorem \ref{tm:model} also concerns sequential $T_2$ spaces,
in fact a generalization of sequentiality will be used. Instead of the almost discretely Lindelöf
property, however, a slightly weakened version of the discrete Lindelöf property 
will be used. On the other hand, we shall obtain a ZFC result.

\begin{definition}
Let $\mu$ be an infinite cardinal number. A space $X$ is called {\em $\mu$-sequential}
if for every non-closed set $A \subs X$ there is a (transfinite) sequence
of length $\le \mu$ of points of $A$
that converges to a point which is not in $A$. Thus, $\omega$-sequential $\equiv$ sequential.
\end{definition}

The following proposition is well-known in the case $\mu = \omega$ and hence its proof,
being a natural adaptation of the countable case, is left to the reader.

\begin{proposition}\label{pr:seq}
If $X$ is a $\mu$-sequential $T_2$ space then (i) $t(X) \le \mu$ and (ii) for every set $A \subs X$
we have $|\overline{A}| \le |A|^\mu$.
\end{proposition}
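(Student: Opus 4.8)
The plan is to prove both parts of Proposition~\ref{pr:seq} by straightforward transfinite adaptations of the classical countable arguments, using the fact that convergent $\mu$-sequences witness membership in closures.

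\textbf{Part (i).} To show $t(X) \le \mu$, I would argue that if $x \in \overline{A}$ then there is a $B \subs A$ with $|B| \le \mu$ and $x \in \overline{B}$. Fix such an $x$ and $A$. Build an increasing chain $\langle A_\xi : \xi < \mu^+ \rangle$ of subsets of $A$, each of size $\le \mu$, as follows. Let $A_0$ be any singleton from $A$. At a non-closed stage, if $x \in \overline{A_\xi}$ we are done; otherwise, since $A$ is... — more carefully: the point is to close off. Actually the cleaner route is: suppose for contradiction that $x \notin \overline{B}$ for every $B \in [A]^{\le\mu}$. Construct by recursion points $a_\xi \in A$ for $\xi < \mu^+$ so that $a_\xi \notin \overline{\{a_\eta : \eta < \xi\}}$ (possible since $x \in \overline{A}$ forces $\overline{\{a_\eta:\eta<\xi\}} \ne \overline{A}$, so we can pick $a_\xi$ outside it — here we need that $A \not\subs \overline{\{a_\eta:\eta<\xi\}}$, which holds because that closure has size-$\le\mu$ dense subset and... hmm, this needs care). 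A genuinely robust approach: the set $C = \{a_\eta : \eta < \mu^+\}$ has the property that $\overline{C} \ne C$ is false in general; instead one shows each proper initial segment is closed, making $C$ a free sequence of length $\mu^+$, and then uses $\mu$-sequentiality applied to the non-closed set $\{a_\eta : \eta < \mu\} \subs C$ together with a counting/cofinality argument to derive a contradiction. I expect this bookkeeping to be the main obstacle: one must verify that a converging $\mu$-sequence inside $A$ really can be extracted at the right stage, and that its limit gives the desired small witnessing set.

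\textbf{Part (ii).} Given (i), the bound $|\overline{A}| \le |A|^\mu$ follows by the standard closure-iteration argument. Define $A_0 = A$ and, recursively, $A_{n+1} = \bigcup\{\, L : L$ is the set of limit points of $\mu$-sequences from $A_n\,\}$, i.e. $A_{n+1}$ adds all limits of length-$\le\mu$ sequences drawn from $A_n$. Since there are at most $|A_n|^\mu$ such sequences and each has at most one limit (Hausdorffness), $|A_{n+1}| \le |A_n|^\mu$. Put $A^* = \bigcup_{n<\omega} A_n$; then $|A^*| \le |A|^\mu \cdot \omega = |A|^\mu$ (noting $|A|^\mu \ge 2^\mu \ge \omega$, or treating the finite case trivially). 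Finally $A^*$ is closed: if $A^*$ were not closed, $\mu$-sequentiality gives a length-$\le\mu$ sequence in $A^*$ converging outside $A^*$; by regularity of $\mu^+$ (or just since the sequence has length $\le \mu$ and $\mathrm{cf}(\omega)=\omega$, one needs the sequence to land in a single $A_n$ — here use that a $\mu$-indexed sequence into $\bigcup_n A_n$ need not be cofinal, so pass to the more careful version $A^* = \bigcup_{\alpha<\mu^+} A_\alpha$ with unions at limits) the sequence lies in some $A_\alpha$, whence its limit lies in $A_{\alpha+1} \subs A^*$, a contradiction. Hence $\overline{A} \subs A^* $ and $|\overline{A}| \le |A|^\mu$. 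The delicate point here is the same as in many closure arguments: choosing the length of the iteration ($\omega$ versus $\mu^+$) so that a converging sequence of length $\le\mu$ is captured at some bounded stage; using $\mu^+$-many stages and taking unions at limits handles this cleanly since $\mathrm{cf}(\mu^+) = \mu^+ > \mu$.
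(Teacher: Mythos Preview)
The paper itself leaves this proof to the reader, so there is no argument to compare against; I will just assess yours.

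Your Part~(ii) is fine once you commit to the $\mu^+$-length iteration. With $A_0 = A$, $A_{\alpha+1}$ the set of limits of $\le\mu$-sequences from $A_\alpha$, and unions at limits, an easy induction gives $|A_\alpha| \le |A|^\mu$ for all $\alpha < \mu^+$; the union $A^* = \bigcup_{\alpha<\mu^+} A_\alpha$ is closed because any $\le\mu$-sequence in it lies in some $A_\alpha$ by regularity of $\mu^+$. (You are right that the $\omega$-length iteration does not suffice, even when $\mu=\omega$.)

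Part~(i), however, has a real gap. The recursion producing $a_\xi \in A \setminus \overline{\{a_\eta : \eta < \xi\}}$ does go through --- the step you flagged as needing care is immediate: if $A \subseteq \overline{\{a_\eta : \eta < \xi\}}$ then $x$ lies in the closure of a $\le\mu$-sized subset of $A$, contrary to hypothesis. But the sequence you obtain is only \emph{left-separated}, not free: knowing $a_\gamma \notin \overline{\{a_\eta : \eta < \xi\}}$ for $\gamma \ge \xi$ says nothing about whether $\overline{\{a_\eta : \eta < \xi\}}$ meets $\overline{\{a_\eta : \eta \ge \xi\}}$. Left-separated sequences of length $\mu^+$ can perfectly well exist in $\mu$-sequential spaces (think of ordinals), and there is no reason for $\{a_\eta : \eta < \mu\}$ to be non-closed in $X$, so no contradiction materialises.

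The clean argument bypasses free sequences entirely. Let $A' = \{y \in X : y \in \overline{B}\ \mbox{for some}\ B \in [A]^{\le\mu}\}$; it suffices to show $A'$ is closed. If not, $\mu$-sequentiality yields a $\le\mu$-sequence $(y_\xi)$ in $A'$ converging to some $z \notin A'$. Pick $B_\xi \in [A]^{\le\mu}$ with $y_\xi \in \overline{B_\xi}$ and set $B = \bigcup_\xi B_\xi \in [A]^{\le\mu}$; then every $y_\xi \in \overline{B}$, hence $z \in \overline{B} \subseteq A'$, a contradiction. This is the ``natural adaptation of the countable case'' the paper has in mind.
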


\begin{theorem}\label{tm:psi}
Let $X$ be a $\mu$-sequential $T_2$ space such that for every $D \in \mathcal{F}(X)$ we have
$L(\overline{D}) \le \mu$, moreover $\psi(X) \le 2^\mu$. Then actually $|X| \le 2^\mu$.
\end{theorem}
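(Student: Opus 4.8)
The strategy mirrors the proof of Theorem~\ref{tm:chi}: establish that free sets have small closures, bound the pseudocharacter of closed sets via a suitable hull operation, apply Theorem~\ref{tm:model} with $\kappa = \mu^+$, and finally count. First I would fix an elementary submodel $M$ of some $H(\lambda)$ that is $\mu$-closed (equivalently $<\mu^+$-closed), with $X \in M$, $|M| = 2^\mu$, and $2^\mu + 1 \subs M$; such an $M$ exists by Proposition~\ref{pr:k}. Now take any $D \in \mathcal{F}(X)$. By hypothesis $L(\overline{D}) \le \mu$, and $\overline D$ is a $T_2$ subspace of pseudocharacter at most $\psi(X) \le 2^\mu$, so by Arhangel'skii-type counting (a Lindelöf-number version: $|\overline D| \le 2^{L(\overline D) \cdot \psi(\overline D) \cdot t(\overline D)}$, or more simply $|\overline D| \le \psi(\overline D, X)^{L(\overline D)}$ combined with $t(X) \le \mu$ from Proposition~\ref{pr:seq}(i)) we get $|\overline D| \le 2^\mu$. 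Thus the analogue of $g$ restricted to free sets is at most $2^\mu$; in particular, since every $D \in \mathcal{F}(X) \cap [X \cap M]^{<\mu^+} \subs M$ has $\overline D \in M$ with $|\overline D| \le 2^\mu \subs M$, we in fact get $\overline D \subs M$.

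Next I need to check the hypothesis of Theorem~\ref{tm:model}, namely that for every $D \in \mathcal{F}(X) \cap [X \cap M]^{<\mu^+}$ we have $\psi(\overline D, X) \le \mu$. Here is the one subtle point: $\psi(X) \le 2^\mu$ bounds the pseudocharacter of points, not of arbitrary closed sets, so I cannot invoke it directly for $\overline D$. Instead I would argue that $\overline D$ is Lindelöf (as $L(\overline D) \le \mu$, but really I only need it is covered nicely) together with $\mu$-sequentiality: a $\mu$-sequential $T_2$ space has, for each point, a pseudobase; and using $t(X) \le \mu$ plus $L(\overline D) \le \mu$ one bounds $\psi(\overline D, X)$ by $2^\mu$ — but Theorem~\ref{tm:model} wants $\le \mu = |M|$. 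Rereading the statement: Theorem~\ref{tm:model} requires $\psi(\overline D, X) \le \mu$ where $\mu = |M| = 2^\mu$ here, so the requirement is $\psi(\overline D, X) \le 2^\mu$, which is immediate since $\overline D$ can be written as an intersection of $\le |X|$-many sets — no, that is circular. The clean route: for each $x \in X \setminus \overline D$, $T_2$-ness and $\mu$-sequentiality give a neighborhood missing a point of $\overline D$; using $L(\overline D) \le \mu$ we extract $\le \mu$ such neighborhoods whose intersection with $\overline D$ is small, and using $\psi(x,X) \le 2^\mu$ for the points of that small remainder, we assemble a family of $\le 2^\mu = |M|$ open sets whose intersection is $\overline D$. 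So the hypothesis of Theorem~\ref{tm:model} holds with the value $\mu$ there equal to $2^\mu$.

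With the hypothesis verified, Theorem~\ref{tm:model} yields that either $X = \bigcup\{\overline D : D \in \mathcal{F}(X) \cap [X \cap M]^{<\mu^+}\}$ or there is a free set of size $\mu^+$ in $X$. The second alternative is impossible: by Proposition~\ref{pr:seq}(i) we have $t(X) \le \mu$, and then the argument of Lemma~\ref{lm:s}(ii) — or rather the standard fact $F(X) \le t(X)$ for spaces where closures of free sequences are Lindelöf, which is exactly our hypothesis $L(\overline D) \le \mu$ giving complete accumulation points — shows $X$ has no free set of size $t(X)^+ \le \mu^+$. (More precisely: a free sequence of length $\mu^+$ would lie in its own closure, which is Lindelöf, hence has a complete accumulation point, and that point splits the sequence across a bound $<\mu^+$ by $t(X) \le \mu$, a contradiction.) Hence the first alternative holds. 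Finally we count: $X$ is the union of at most $|[X \cap M]^{<\mu^+}| \le |M|^\mu = (2^\mu)^\mu = 2^\mu$ sets, each of size $\le 2^\mu$ by the first paragraph, so $|X| \le 2^\mu \cdot 2^\mu = 2^\mu$.

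\textbf{Main obstacle.} The delicate step is the verification that $\psi(\overline D, X)$ is small enough — reconciling the point-pseudocharacter bound $\psi(X) \le 2^\mu$ with the need to control the pseudocharacter of the \emph{closed set} $\overline D$. Unlike in Theorem~\ref{tm:chi}, there is no $T_3$ hypothesis to convert $h(X)$ into $\Psi(X)$, so one must exploit $\mu$-sequentiality and $L(\overline D) \le \mu$ directly to separate $\overline D$ from the outside points; getting this to come out at $\le 2^\mu$ rather than something larger is where the argument has to be run carefully.
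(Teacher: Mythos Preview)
Your overall architecture matches the paper's, but two steps are not correctly executed. First, your bound $|\overline{D}| \le 2^\mu$ via ``Arhangel'skii-type counting'' fails: the Shapirovskii inequality $|Y| \le 2^{L(Y)\cdot t(Y)\cdot \psi(Y)}$ gives only $2^{2^\mu}$ here since $\psi(\overline{D})$ is bounded merely by $2^\mu$, and your alternative $|\overline D| \le \psi(\overline D, X)^{L(\overline D)}$ is neither a standard inequality nor usable (it invokes the very quantity $\psi(\overline{D},X)$ you are about to bound). The paper instead first establishes $F(X)\le\mu$ via the Lemma~\ref{lm:s}(ii) argument, so every free $D$ has $|D|\le\mu$, and then applies Proposition~\ref{pr:seq}(ii) directly: $|\overline{D}|\le |D|^\mu \le \mu^\mu = 2^\mu$. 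You had these ingredients at hand; you simply did not assemble them.

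Second, and more seriously, your sketch for $\psi(\overline{D},X)\le 2^\mu$ --- the step you yourself flag as the main obstacle --- is not a valid argument: you start from neighborhoods of points $x\in X\setminus\overline{D}$, but such neighborhoods do not contain $\overline{D}$, and the ``small remainder'' manoeuvre cannot be parsed into something correct. The paper works from the \emph{inside}: for each $y\in\overline{D}$ fix a family $\mathcal{U}_y$ of open neighborhoods with $\bigcap\mathcal{U}_y=\{y\}$ and $|\mathcal{U}_y|\le 2^\mu$ (this is where $\psi(X)\le 2^\mu$ enters), and set $\mathcal{U}=\bigcup\{\mathcal{U}_y : y\in\overline{D}\}$, which has size $\le |\overline{D}|\cdot 2^\mu \le 2^\mu$ by the first step. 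Given any $p\notin\overline{D}$, choosing $U_y\in\mathcal{U}_y$ with $p\notin U_y$ yields a cover of $\overline{D}$, and $L(\overline{D})\le\mu$ produces a subcover of size $\le\mu$ whose union contains $\overline{D}$ and omits $p$. All such unions lie in $\{\bigcup\mathcal{V}:\mathcal{V}\in[\mathcal{U}]^{\le\mu}\}$, a family of size $\le(2^\mu)^\mu=2^\mu$, and their intersection is exactly $\overline{D}$. The idea you are missing is to fix the pseudobases at points of $\overline{D}$ \emph{in advance}, so that the separating open sets all come from a fixed pool of size $2^\mu$ rather than being built ad hoc for each outside point.
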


\begin{proof}
Let us start by noting that, using $t(X) \le \mu$, the same argument as in the proof of part (ii) of Lemma
\ref{lm:s} yields $F(X) \le \mu$. Thus, by part (ii) of Proposition \ref{pr:seq},  for
every free set $D \in \mathcal{F}(X)$ we have $|\overline{D}| \le \mu^\mu = 2^\mu$.

We claim that then  $\psi(\overline{D},X) \le 2^\mu$ also holds for
every free set $D \in \mathcal{F}(X)$. To see this, let us fix for every point $x \in \overline{D}$
a family $\mathcal{U}_x$ of open neighborhoods of $x$ with $|\mathcal{U}_x| \le 2^\mu$
and  $\bigcap\mathcal{U}_x=\{x\}$ and then put
$\mathcal{U} = \bigcup \{\mathcal{U}_x : x \in \overline{D}\}$. Clearly, we have $|\mathcal{U}| \le 2^\mu$
as well. Now, if we fix any point
$p \in X \setm \overline{D}$ then for every $x \in \overline{D}$ there is $U_x \in \mathcal{U}_x$
such that $p \notin U_x$. But then $L(\overline{D}) \le \mu$ implies that for some $A \subs \overline{D}$
with $|A| \le \mu$ we have $\overline{D} \subs \bigcup \{U_x : x \in A\}$. This shows that the family
$$\mathcal{W} = \{\cup \mathcal{V} : \mathcal{V} \in [\mathcal{U}]^{\le \mu} \mbox{ and } \overline{D} \subs \cup \mathcal{V}\}$$
of open sets satisfies $\cap \mathcal{W} = \overline{D}$ and, as clearly $|\mathcal{W}| \le 2^\mu$, the family $\mathcal{W}$
witnesses $\psi(\overline{D},X) \le 2^\mu$.

Consequently, if $M$ is an appropriate $\mu$-closed elementary submodel of cardinality $2^\mu$
such that $\{X\} \cup 2^\mu + 1 \subs M$ then the assumptions of Theorem \ref{tm:model} are
satisfied with $\kappa = \mu^+$. Thus, from $F(X) \le \mu$ we conclude that
$$X = \bigcup\{\overline{D} : D \in \mathcal{F}(X) \cap M\}.$$
But then $|M| = 2^\mu$ and $|\overline{D}| \le 2^\mu$ for all $D \in \mathcal{F}(X)$
clearly imply $|X| \le 2^\mu$.
\end{proof}

Arhangel'skii and Buzyakova has shown in \cite{AB} that the
cardinality of a sequential linearly Lindelöf Tikhonov space
$X$ does not exceed $2^\omega$ if the pseudocharacter of
$X$ does not exceed $2^\omega$. Clearly, the
case  $\mu = \omega$ of Theorem \ref{tm:psi} is a partial improvement on their result.

It is obvious that radial spaces of tightness $\le \mu$ are $\mu$-sequential but it is
also clear that the converse of this statement fails, as is demonstrated by the 
existence of sequential spaces that are not Fr\`echet.

On the other hand, every $\mu$-sequential space is pseudoradial and has tightness $\le \mu$.
Our next example shows that, at least consistently, the converse of this statement also fails,
even for compact spaces.

\begin{example}
The one-point compactification $X = K \cup \{p\}$ of the so called Kunen line $K$, 
constructed from CH in \cite{JKR}, is
pseudoradial and hereditarily separable, hence countably tight, but not sequential.
\end{example}

\begin{proof}
$X$ is hereditarily separable because $K$ is. $K$ is a non-closed subset of $X$ that is sequentially
closed in $X$ because $K$ is countably compact, hence $X$ is not sequential.

Finally, assume that $A \subs X$ is not closed in $X$. If there is a point $x \in K$ with
$x \in \overline{A} \setm A$ then an $\omega$-sequence in $A$ converges to $x$ because $K$
is first countable. Otherwise, $A \subs K$ is closed in $K$ and $\overline{A} = A \cup \{p\}$, hence
$A$ is not compact. But every countable closed subset of $K$ is compact, again by the countable 
compactness of $K$, hence we have $|A| = \omega_1$. Every neighborhood of $p$ in $X$ is
clearly  co-countable, hence we have $\psi(p,A \cup \{p\}) = \chi(p,A \cup \{p\}) = \omega_1$,
and so there is an $\omega_1$-sequence in $A$ that converges to $p$. 
\end{proof}

The following intriguing problem, however remains open.

\begin{problem}
Is there a ZFC example of a countably tight pseudoradial space that is not sequential?
\end{problem}

\end{document}